\providecommand{\email}[1]{\href{mailto:#1}{\nolinkurl{#1}}}
\setlist[enumerate,1]{label={(\roman*)}}
\setlist[enumerate,2]{label={(\alph*)}}
\setlist[enumerate,3]{label={(\Roman*)}}
\newcommand{\newsstheorem}[2]{
  \newaliascnt{#1}{dummy}
  \newtheorem{#1}[#1]{#2}
  \aliascntresetthe{#1}
  \expandafter\def\csname #1autorefname\endcsname{#2}
}
\numberwithin{dummy}{section}
\theoremstyle{plain}
  \theoremstyle{definition}
\theoremstyle{remark}
\newenvironment{eqnarr*}{\begin{IEEEeqnarray*}{rCl}}{\end{IEEEeqnarray*}\ignorespacesafterend}
\newcommand\RR{\mathbb{R}}
\newcommand\PP{\mathbb{P}}
\newcommand\EE{\mathbb{E}}
\newcommand\NN{\mathbb{N}}
\newcommand\ZZ{\mathbb{Z}}
\newcommand\e{{\mathrm e}}
\newcommand\mathof[1]{{\operator@font#1}} \makeatother
\newcommand\dd{\mathof{d}}
\begin{document}

\title{Universality of \\Noise Reinforced Brownian Motions}
\author{Jean Bertoin\footnote{Institute of Mathematics, University of Zurich, Switzerland, \texttt{jean.bertoin@math.uzh.ch}}  }
\date{\small Dedicated to the memory of Vladas Sidoravicius}
\maketitle 
\thispagestyle{empty}

\begin{abstract} 
A noise reinforced Brownian motion  is a centered Gaussian process  $\hat B=(\hat B(t))_{t\geq 0}$ with covariance 
$$\EE(\hat B(t)\hat B(s))=(1-2p)^{-1}t^ps^{1-p} \quad \text{for} \quad 0\leq s \leq t,$$
 where $p\in(0,1/2)$ is a reinforcement parameter.  Our main purpose is to establish a version of Donsker's invariance principle for a large family of step-reinforced random walks in the diffusive regime, and more specifically, to show that  $\hat B$ arises as the universal scaling limit of the former. This extends known results on the asymptotic behavior of the so-called elephant random walk. 
\newline  \vskip 1mm
{\normalfont \bfseries Keywords:}
Reinforcement, Brownian motion, Invariance Principle, Elephant Random Walk.\newline
\vskip 1mm
{\normalfont \bfseries Mathematics Subject Classification:}  60G50; 60G51;  60K35.

\end{abstract}

\section{Introduction}\label{s:intro}
This work concerns a rather simple real-valued and centered Gaussian process $\hat B=(\hat B(t))_{t\geq 0}$ with covariance function
\begin{equation}\label{E:cov}
\EE(\hat B(t)\hat B(s))=\frac{t^ps^{1-p}}{1-2p}\qquad \text{ for }0\leq s \leq t,
\end{equation}  
where $p\in(0,1/2)$ is a fixed parameter. Recently, this process has notably appeared  as the scaling limit for diffusive regimes of the so-called elephant random walk, a  simple random walk with memory that has been introduced  by Sch\"utz and Trimper~\cite{SchTr}. Just as the standard Brownian motion $B$ corresponds to the integral of a white noise, 
 $\hat B$ can be thought of as the integral of a reinforced version of the white noise, hence the name
 \textit{noise reinforced Brownian motion}. Here, reinforcement means  that the noise tends to repeat itself infinitesimally as time passes; we refer to \cite{Pem} for a survey of various models of stochastic processes with reinforcement and their applications, and to \cite{KV} and works cited therein for more recent contributions in this area.  The parameter $p$ should be interpreted  as the strength of the reinforcement; specifically, it represents the probability that an infinitesimal portion of the noise is a repetition. In the limiting case $p=0$ without reinforcement, one just recovers the standard Brownian motion.
 
 Our purpose here is twofold. We will first present several basic properties of $\hat B$ that mirror well-known facts for the standard Brownian motion, even though the laws of $B$ and $\hat B$ are mutually singular.  We will then establish a version of Donsker's invariance principle. That is, we will show that  any so-called step-reinforced random walk with reinforcement parameter $p$, whose typical step has a finite second moment, converges  after the usual centering and rescaling to a noise reinforced Brownian motion.

This invariance principle for step-reinforced random walks  has been established previously 
for elephant random walks, that is in the special case when the typical step has the Rademacher law; see \cite{BaurBer, ColGavSch2}.
Technically,  \cite{ColGavSch2} uses Skorokhod's embedding and relies crucially on the assumption that the increments take values in $\{-1,+1\}$, whereas 
  \cite{BaurBer} uses  limit theorems for generalized P\'olia urns due to  Janson \cite{Janson}. The latter approach is not available for arbitrary step distributions either, notably  as one would need to work with urn models having types in an infinite space (the real line $\RR$ to be more specific), and how to deal with this kind of urns is still an open problem; see Remark 4.1 in \cite{Janson}.

We shall therefore  follow here a different method and rather embed a  step-reinforced random walk in a branching process with types in $\RR$, using a time-substitution via an independent Yule process. This yields a remarkable martingale, whose quadratic variation can then be estimated from well-known properties of Yule processes. In turn, this enables the application of the martingale functional central limit theorem (later on referred to as martingale FCLT).

\section{Some basic properties}
We start by observing from \eqref{E:cov} that the noise reinforced Brownian motion admits a simple representation as a Wiener  integral, namely
\begin{equation}\label{E:Wrep}
\hat B(t)= t^p\int_0^t s^{-p} \dd B(s), \qquad t\geq 0,
\end{equation}
where $B=(B(s))_{s\geq 0}$ is a standard Brownian motion. Note that equivalently, 
\begin{equation}\label{E:Wrep2}
 \hat B \text{ has the same law as  }\left(  \frac{ t^p}{\sqrt{1-2p}} B(t^{1-2p})\right)_{t\geq 0}.
\end{equation}

It follows immediately from \eqref{E:Wrep2} and the classical law of the iterated logarithm for $B$ (see, e.g. Theorem II.1.19 in \cite{RY}) that
\begin{equation} \label{E:LIL} \limsup_{t\to \infty} \frac{\hat B(t)}{\sqrt{2t \ln \ln t}} = \limsup_{t\to 0+} \frac{\hat B(t)}{\sqrt{2t \ln \ln (1/t)}} = \frac{1}{\sqrt{1-2p}} \qquad \text{a.s.}
\end{equation}
  In particular, we see  that for different reinforcement parameters $p$, the distributions of noise reinforced Brownian motions, say on the time interval $[0,1]$, yield laws on ${\mathcal C}([0,1])$ which are mutually singular, and are also singular with respect to the Wiener measure.

We deduce from \eqref{E:Wrep} by stochastic calculus that $\hat B$ is a semi-martingale which solves the stochastic differential equation
\begin{equation} \label{E:eds}
 \dd \hat B(t)= \dd B(t) + \frac{p}{t} \hat B(t) \dd t,\qquad \hat B(0)=0.
 \end{equation}
This shows that $\hat B$  is actually a time-inhomogeneous diffusion process with quadratic variation $\langle \hat B\rangle(t)=t$. 
In this direction, we also infer from \eqref{E:cov} that the process 
$$
 \hat b(t)= \hat B(t)- t^{1-p} \hat B(1) \qquad \text{for }0\leq t \leq 1,
$$
 is independent of  $\hat B(1)$. Hence, for any $x\in\RR$, 
 $$\hat b(t)+t^{1-p} x=\hat B(t)+t^{1-p}(x-\hat B(1))\qquad \text{for }0\leq t \leq 1,$$ is  a version of the bridge of $\hat B$ from $0$ to $x$ with unit duration. These properties should be viewed as the reinforced versions of the classical construction of
  Brownian bridges; see for instance  \cite{RY} on page 37.

We further see from \eqref{E:cov} that the scaling property,
\begin{equation}\label{E:scal}
\text{for every $c>0$,  }\left( c^{-1} \hat B(c^2 t)\right)_{t\geq 0} \text{ has the same law as }\hat B,
 \end{equation}
as well as the time-inversion property, 
$$
 \left( t \hat B(1/t)\right)_{t> 0} \text{ has the same law as } \left( \hat B(t)\right)_{t> 0},
$$
both hold. Needless to say, these two properties are also fulfilled by the standard Brownian motion;
see, e.g. Proposition I.1.10 in \cite{RY}. In this vein, we also point at a remarkable connection with Ornstein-Uhlenbeck processes:  
 the process
 $$\hat U(t)=\e^{-t/2} \hat B(\e^t)\,, \qquad t\in \RR$$
 is a stationary Ornstein-Uhlenbeck process with infinitesimal generator 
 $${\mathcal G}f(x)= \frac{1}{2} f''(x)+(p-1/2) x f'(x),$$ where $f\in{\mathcal C}^2(\RR)$. 
  This can be checked by stochastic calculus from \eqref{E:eds}, or directly by observing  from \eqref{E:cov} and  \eqref{E:scal} that $\hat U$ is a stationary Gaussian 
 process with covariance 
 $$\EE(\hat U(t)\hat U(0))= \frac{\e^{(p-1/2)t}}{1-2p} \qquad \text{for }t\geq 0.$$

 On the other hand, several classical results for the standard Brownian motion plainly  fail for its reinforced  version. For instance, the increments of $\hat B$ are clearly not independent, and the time-reversal property (e.g. Exercise 1.11 in Chapter I of \cite{RY}) also fails.

\section{An invariance principle with reinforcement}

The main purpose of this section is to point out that noise reinforced Brownian motions arise as universal scaling limits of a large class of random walks with step reinforcement. We first recall some features on the so-called elephant random walk, where  the story begins.

The \textit{elephant random walk} has been introduced by Sch\"utz and Trimper~\cite{SchTr} 
as a discrete-time nearest neighbor process with memory on $\ZZ$; it can be depicted as follows. 
Fix some $q\in(0,1)$ and call $q$ the memory parameter. 
Imagine that a walker (an elephant) makes a first step in $\{-1,+1\}$ at time $1$; then
at each time $n\geq 2$,  it selects
randomly a step from its past. With probability $q$, the elephant
repeats this step, and with complementary probability $1-q$, it makes the opposite step.  Note that for $q=1/2$, the elephant merely follows the path of a simple symmetric random walk. The elephant random walk has generated much interest in the recent years, we refer notably to \cite{BaurBer, Bercu, ColGavSch1, ColGavSch2, Copapa, KuTa, Kur}, see also \cite{Baur, Bercu2, NRLP, Buart, GutStadt} for variations, and references therein for further related works. A remarkable feature  is  that the large time asymptotic behavior of an elephant random walk
is diffusive when the memory parameter $q$ is less than $3/4$  and super-diffusive
when $q>3/4$.
  \begin{remark} The laws of the iterated logarithm \eqref{E:LIL} for a noise reinforced Brownian motion bear the same relation to that for the elephant random walk in the diffusive regime (Corollary 1 in \cite{ColGavSch2} and Theorem 3.2 in \cite{Bercu}; beware that the memory parameter denoted by $p$ there corresponds to $q=(1+p)/2$ in the present notation as it will be stressed below), as the law of the iterated logarithm for the standard Brownian motion due to P. L\'evy does to that for the simple random walk due to A.Y. Khintchine.
  \end{remark}

We are interested in a generalization where the distribution of a typical step of the walk is arbitrary, that we call \textit{step reinforced random walk}. Fix a parameter $p\in(0,1)$, called the \textit{reinforcement parameter}\footnote{Beware that we assumed $p<1/2$ in the preceding sections. The first part  of the present section (super-diffusive regime) does not involve any noise reinforced Brownian motion, and the case $p\geq 1/2$ is allowed. In the second part of this section (diffusive regime), we shall again focus on the case $p<1/2$ and noise reinforced Brownian motions will then re-appear.}; at each discrete time, with probability  $p$, a step reinforced random walk repeats one of its preceding steps chosen uniformly at random, and otherwise, i.e. with probability $1-p$, it has an independent increment with a fixed distribution.
More precisely, consider  a sequence $X_1, X_2, \ldots$ of i.i.d. copies of a random variable $X$ in $\RR$ and define recursively $\hat X_1, \hat X_2, \ldots$ as follows. Let  $(\varepsilon_i: i\geq 2)$ be an independent sequence of Bernoulli variables with parameter $p$.  
We set first $\hat X_1=X_1$, and next for $i\geq 2$, we let $\hat X_i=X_i$ if $\varepsilon_i=0$, whereas   we define $\hat X_i$ as a uniform random sample from $\hat X_1, \ldots, \hat X_{i-1}$   if $\varepsilon_i=1$.
Finally, the sequence of the partial sums
$$\hat S(n)=\hat X_1+ \cdots + \hat X_n, \qquad n\in\NN,$$ 
is referred to as  a step reinforced random walk. We stress that in general, the Markov property fails for $\hat S$ (even though it may hold for certain specific step distributions).

 When the typical step $X$ has the Rademacher law, i.e. $\PP(X=1)=\PP(X=-1)=1/2$,   K\"ursten \cite{Kur} (see also \cite{G-NL}) pointed out that $\hat S$ is  a version of the  elephant random walk  with memory parameter $q=(p+1)/2$ in the present notation.  
  When $X$ has a symmetric stable distribution, $\hat S$ is the so-called shark random swim which has been studied in depth by
Businger \cite{Buart}. More general versions when the distribution of $X$ is infinitely divisible have been considered in \cite{NRLP}. 

The main result of Businger \cite{Buart} is that the large time asymptotic behavior of a shark random swim exhibits 
 a phase transition similar to that for the elephant random walk, but for a different critical parameter. We shall now extend this to a large class of step reinforced random walks.
In the sequel we implicitly rule out the degenerate case when  the typical step variable  $X$ is  a constant. 
We start with the super-diffusive regime for which  all the ingredients are already in Section 3.1 in \cite{Buart}.

\begin{theorem}\label{T1} 
 Let $p\in(1/2,1)$, and suppose that $X\in L^2(\PP)$. Then 
$$\lim_{n\to \infty} \frac{\hat S(n)-n\EE(X)}{n^p}= L\qquad\text{in } L^2(\PP),$$
where $L$ is some non-degenerate random variable.
\end{theorem}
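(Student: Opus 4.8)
The plan is to exploit the fact that, after centering, a suitably normalized version of $\hat S$ is a square-integrable martingale converging in $L^2$; this essentially follows the line of argument of Businger~\cite{Buart}. First I would reduce to the case $\EE(X)=0$: if $(\hat X_i)$ denotes the reinforced sequence built from $(X_i)$ with a given realization of the Bernoulli variables $(\varepsilon_i)$ and of the uniform samples, then the reinforced sequence built from $(X_i-\EE(X))$ with the \emph{same} randomness is exactly $(\hat X_i-\EE(X))$, so that $\hat S(n)$ for the centered walk coincides with $\hat S(n)-n\EE(X)$ for the original one. Hence we may and do assume $\EE(X)=0$, and the claim becomes $\hat S(n)/n^p\to L$ in $L^2$ for some non-degenerate $L$.

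Next, let $(\mathcal F_n)$ be the natural filtration of the construction up to step $n$. Conditionally on $\mathcal F_n$, the increment $\hat X_{n+1}$ is a fresh copy of $X$ with probability $1-p$ and a uniform pick among $\hat X_1,\dots,\hat X_n$ with probability $p$, so $\EE(\hat X_{n+1}\mid\mathcal F_n)=\frac pn\hat S(n)$ and therefore $\EE(\hat S(n+1)\mid\mathcal F_n)=\frac{n+p}{n}\hat S(n)$. Setting $a_n=\prod_{k=1}^{n-1}\frac{k+p}{k}=\Gamma(n+p)/(\Gamma(n)\Gamma(1+p))$, the process $M_n:=\hat S(n)/a_n$ is an $(\mathcal F_n)$-martingale, and Stirling's formula gives $a_n\sim n^p/\Gamma(1+p)$. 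It then suffices to prove that $M$ is bounded in $L^2$: in that case $M_n$ converges in $L^2$ (and a.s.) to some $M_\infty$, and $\hat S(n)/n^p=(a_n/n^p)M_n\to M_\infty/\Gamma(1+p)=:L$ in $L^2$.

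For the $L^2$ bound, from $a_{n+1}=\frac{n+p}{n}a_n$ one gets $M_{n+1}-M_n=a_{n+1}^{-1}\bigl(\hat X_{n+1}-\frac pn\hat S(n)\bigr)$, hence $\EE\bigl((M_{n+1}-M_n)^2\mid\mathcal F_n\bigr)=a_{n+1}^{-2}\,\mathrm{Var}(\hat X_{n+1}\mid\mathcal F_n)$. The auxiliary process $V(n):=\sum_{i=1}^n\hat X_i^2$ now enters: conditioning as above yields $\EE(\hat X_{n+1}^2\mid\mathcal F_n)=(1-p)\EE(X^2)+\frac pn V(n)$, and therefore $\EE(V(n+1)\mid\mathcal F_n)=\frac{n+p}{n}V(n)+(1-p)\EE(X^2)$; solving this linear recursion for $\EE(V(n))$ (dividing by $a_{n+1}$, telescoping, and using $a_n\sim n^p/\Gamma(1+p)$ together with $p<1$) gives $\EE(V(n))=O(n)$. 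Plugging this in and discarding the negative term $-\frac{p^2}{n^2}\hat S(n)^2$ in the conditional variance, we obtain $\EE\bigl((M_{n+1}-M_n)^2\bigr)\le a_{n+1}^{-2}\bigl((1-p)\EE(X^2)+\frac pn\EE(V(n))\bigr)=O(n^{-2p})$. Since $p>1/2$ we have $2p>1$, so $\sum_n\EE\bigl((M_{n+1}-M_n)^2\bigr)<\infty$ and $\sup_n\EE(M_n^2)<\infty$, as required. Finally, $L$ is non-degenerate: $M_n^2$ is a submartingale, so $\EE(L^2)\,\Gamma(1+p)^2=\EE(M_\infty^2)=\lim_n\EE(M_n^2)\ge\EE(M_1^2)=\mathrm{Var}(X)>0$, the strict inequality holding because $X$ is assumed not to be constant.

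The computations are all elementary; the one step that genuinely goes beyond the first-moment martingale is the control of the second-moment process $\EE(V(n))=O(n)$, and it is precisely this estimate, combined with the decay $a_n^{-2}\sim\Gamma(1+p)^2 n^{-2p}$, that makes the increment variances summable exactly in the super-diffusive regime $p>1/2$.
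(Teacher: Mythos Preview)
Your argument is correct and self-contained, but it follows a genuinely different route from the paper's proof. The paper does not use the martingale $M_n=\hat S(n)/a_n$ at all; instead it writes $\hat S(n)=\sum_i r_{i,n}X_i$, where $r_{i,n}$ counts how many times the innovation $X_i$ has been repeated among $\hat X_1,\dots,\hat X_n$, and then imports from Businger~\cite{Buart} the facts that $n^{-p}r_{i,n}\to R_i$ with $\sum_i\EE(R_i^2)<\infty$ and that $\sum_i\EE(|n^{-p}r_{i,n}-R_i|^2)\to 0$. The limit is then identified explicitly as $L=\sum_i R_i X_i$, and the $L^2$ convergence follows by conditioning on the repetition counts. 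Your approach, by contrast, is the direct martingale method familiar from the elephant random walk literature (Bercu, Coletti \emph{et al.}): it avoids any appeal to \cite{Buart}, the key new ingredient being the linear recursion for $\EE(V(n))$ that yields $\EE(V(n))=O(n)$ and hence summable increment variances precisely when $p>1/2$. What you gain is a fully self-contained proof; what the paper's route gains is an explicit representation of the limit $L$ in terms of the repetition structure, which may be informative in its own right.
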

\begin{proof} By centering and normalizing, we  may assume without loss of generality that $\EE(X)=0$ and $\textrm{Var}(X)=1$. One first introduces  for every $i,n\in\NN$, the number $r_{i,n}$ of repetitions of the variable $X_i$ in the reinforced sequence $\hat X_1, \ldots, \hat X_n$ (in particular, $r_{i,n}=0$ when either $\varepsilon_i=1$ or $i>n$). We stress that repetition numbers only depend on the Bernoulli variables $\varepsilon_j$ and on the uniform random samples among the
previous steps, and 
are hence independent of the variables $X_j$. 
We know from Lemmas 3 and 5 in \cite{Buart} that for each $i\in\NN$
 $$\lim_{n\to \infty} n^{-p} r_{i,n}=R_i, \qquad \text{a.s.}$$
where $R_i$ is some non-degenerated random variable with $\sum_i \EE\left(R_i^2\right)<\infty$. 
 
 In particular, 
$\sum_i  R_i^2<\infty$ a.s., and since the variables $X_i$ are i.i.d. centered and with unit variance, the sum $\sum_i R_i X_i=L$ is well-defined a.s. (as a martingale limit, conditionally on the $R_i$). More precisely, $\EE(L)=0$ and $\EE(L^2)=\sum_i \EE\left(R_i^2\right)$.

To conclude, we recall from Equation (6) in \cite{Buart}  that
 \begin{equation} \label{E:bu6}
 \lim_{n\to \infty} \sum_i \EE\left( \left| n^{-p} r_{i,n} -R_i
 \right|^2\right) =0.
 \end{equation}
By the construction of the step reinforced random walk, we have
$$\hat S(n)=\sum_{i} r_{i,n} X_i.$$
Since the variables $r_{i,n}$ and $R_i$ are independent of the $X_i$, and further  $\EE(X)=0$ and $\EE(X^2)=1$, there is the identity 
$$ \EE\left( | n^{-p} \hat S(n)-L|^2\right) = 
\EE\left( \sum_i \left| n^{-p} r_{i,n} -R_i
 \right|^2\right).
$$
An appeal to \eqref{E:bu6} completes the proof.
\end{proof}

We then turn our attention to the diffusive regime, and obtain a version of Donsker's invariance principle for step reinforced random walks. In this direction, we refer to Chapter VI in \cite{JS} for background on the Skorokhod topology and weak convergence of stochastic processes. 

\begin{theorem}\label{T2} 
 Let $p\in(0,1/2)$, and suppose that  $X \in  L^2(\PP)$. Then as $n\to \infty$,  the sequence of processes
$$\frac{ \hat S(\lfloor tn\rfloor) -tn\EE(X)}{\sqrt { n \mathrm{Var}(X)}} \,, \qquad t\geq 0$$
converges in distribution in the sense of Skorohod  
towards a noise reinforced Brownian motion $\hat B$ with reinforcement parameter $p$. 
\end{theorem}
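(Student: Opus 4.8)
The plan is to first reduce to $\EE(X)=0$, $\mathrm{Var}(X)=1$: since $\sum_i r_{i,n}=n$ and the repetition numbers are independent of the $X_i$, one has $\hat S(n)-n\EE(X)=\sum_i r_{i,n}(X_i-\EE(X))$, so the general case follows from the centered, normalized one. Next, I would embed the step-reinforced random walk into a continuous-time branching process driven by an independent Yule process: start with a single individual carrying the value $X_1$; when the population has size $k$, each individual independently produces at rate $1$ a child which, independently, is a clone of its parent with probability $p$ and otherwise carries a fresh independent copy of $X$. Writing $\mathcal Y(t)$ for the population size and $\hat S^{\mathrm{ct}}(t)$ for the sum of the values alive at time $t$, the $m$-th birth time $\sigma_{m-1}$ satisfies $\mathcal Y(\sigma_{m-1})=m$, and the process $(\hat S^{\mathrm{ct}}(\sigma_{m-1}))_{m\geq 1}$ has the law of $(\hat S(m))_{m\geq 1}$. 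Since $\mathcal Y$ is a Yule process, $\e^{-t}\mathcal Y(t)\to W$ a.s.\ with $W$ of exponential law and mean $1$, whence $\sigma_{m-1}=\ln m-\ln W+o(1)$ a.s.

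A compensation argument — at a birth, which occurs at rate $\mathcal Y(t)$, the jump of $\hat S^{\mathrm{ct}}$ has conditional mean $p\,\hat S^{\mathrm{ct}}(t)/\mathcal Y(t)$, the fresh part averaging to $0$ — shows that $\mathcal M(t):=\e^{-pt}\hat S^{\mathrm{ct}}(t)$ is an $L^2$ càdlàg martingale with predictable quadratic variation
$$\langle\mathcal M\rangle(t)=\int_0^t\e^{-2ps}\bigl(p\,Q(s)+(1-p)\mathcal Y(s)\bigr)\,\dd s,$$
where $Q(s)$ is the sum of the squares of the values alive at time $s$. The crucial input is $\e^{-s}Q(s)\to W$ in probability. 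Conditionally on the genealogy and the clone/fresh flags, which are independent of the $X_i$, the values alive at time $s$ split into clumps of identical entries, $Q(s)$ is the sum of the corresponding i.i.d.\ squares weighted by the (now deterministic) clump sizes, and since every clump evolves after its creation as a Yule process of rate $p$ the largest clump is $o(\mathcal Y(s))$; a weak law of large numbers for weighted sums of i.i.d.\ variables — needing only $X\in L^2$ — then gives $Q(s)/\mathcal Y(s)\to 1$, hence $\e^{-s}Q(s)\to W$, in probability. Together with $\e^{-s}\mathcal Y(s)\to W$ this yields $\e^{-(1-2p)T}\langle\mathcal M\rangle(T)\to\frac{W}{1-2p}$ in probability as $T\to\infty$.

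I would then rescale. Put $G_n(u):=n^{p-1/2}\mathcal M(\ln n+u)$ for $u\geq-\ln n$, extended by $0$ for $u<-\ln n$. The previous step shows that the bracket of $G_n$ on $(-\infty,u]$ converges to $\frac{W}{1-2p}\e^{(1-2p)u}$, while the conditional Lindeberg condition holds because the jumps of $G_n$ are $n^{-1/2}$ times a single copy of $X$, up to a bounded factor, and $\EE\bigl(X^2\Indic{|X|>\varepsilon\sqrt n}\bigr)\to 0$. The functional martingale central limit theorem, in the version permitting a random limiting bracket — the limit being a Brownian motion time-changed by that bracket and independent of it; see Chapter~VIII of \cite{JS} — then gives $G_n\Rightarrow G$ with $G(u)=\beta\bigl(\tfrac{W}{1-2p}\e^{(1-2p)u}\bigr)$ for a standard Brownian motion $\beta$ independent of $W$.

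Finally I would transfer back. Writing $\sigma_{\lfloor tn\rfloor-1}=\ln n+u_n(t)$ with $u_n(t)\to\ln(t/W)$ a.s., uniformly for $t$ in compact subsets of $(0,\infty)$, one has
$$\frac{\hat S(\lfloor tn\rfloor)}{\sqrt n}\ \overset{d}{=}\ \frac{\hat S^{\mathrm{ct}}(\sigma_{\lfloor tn\rfloor-1})}{\sqrt n}\ =\ \e^{p\,u_n(t)}\,G_n\bigl(u_n(t)\bigr),$$
and, by joint convergence and the continuity of composition, the right-hand side converges in law to $(t/W)^p\,G(\ln(t/W))=(t/W)^p\,\beta\bigl(\tfrac{W^{2p}}{1-2p}t^{1-2p}\bigr)=t^p\,\tilde\beta\bigl(\tfrac{t^{1-2p}}{1-2p}\bigr)$, where $\tilde\beta(s):=W^{-p}\beta(W^{2p}s)$ is, by Brownian scaling and the independence of $\beta$ and $W$, a standard Brownian motion; by \eqref{E:Wrep2} this limit has the law of $\hat B$. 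The behaviour near $t=0$ is controlled by Doob's inequality applied to $\mathcal M$, using $\EE\langle\mathcal M\rangle(\ln(\delta n))=O\bigl((\delta n)^{1-2p}\bigr)$, which supplies the tightness needed for convergence in the Skorokhod sense on all of $[0,\infty)$. I expect the main difficulty to lie in these last two steps: invoking the martingale FCLT with a random limiting bracket so as to obtain a Brownian motion genuinely independent of $W$, and then making rigorous — in the Skorokhod topology, and including the degenerate regime $t\to 0$ — the composition with the random, $n$-dependent time shift $u_n(\cdot)$.
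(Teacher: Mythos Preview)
Your plan follows the same backbone as the paper's proof: embed $\hat S$ in a Yule process, observe that $\e^{-pt}\hat S(Y_t)$ is an $L^2$ martingale, control its bracket via $\e^{-t}Y_t\to W$, apply a martingale FCLT, and undo the time change using the asymptotics of the Yule birth times. The differences are tactical rather than strategic, but they are worth recording.

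First, for the bracket you argue that $Q(s)/\mathcal Y(s)\to 1$ by decomposing the population into clone-clumps and invoking a weighted weak law of large numbers for the i.i.d.\ variables $X_i^2$; this uses only $X\in L^2$. The paper instead notes that $\hat V(n)=\sum_{j\le n}\hat X_j^2$ is itself (after centering) a step-reinforced walk, builds a second martingale $\e^{-pt}\hat V(Y_t)-(1-p)\sigma^2\int_0^t\e^{-ps}Y_s\,\dd s$, and bounds its second moment by $\EE(X^4)$. That estimate forces the paper to prove the theorem first for \emph{bounded} $X$ and then pass to general $X\in L^2$ by a truncation argument, using a maximal inequality for $\hat S$ (which you also derive, via Doob, for the behaviour near $t=0$). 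Your route avoids the truncation step entirely; the price is that the clump-WLLN and the conditional Lindeberg sum (the repeat part contributes $\frac{p}{k-1}\sum_{j<k}\hat X_j^2\Indic{|\hat X_j|>\varepsilon'\sqrt n}$, which must be shown to be $o(1)$ after summing over births) need to be written out carefully.

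Second, the paper time-changes by the inverse of $\langle M\rangle$ so as to apply the FCLT with a \emph{deterministic} bracket, and then separately upgrades to joint convergence of $(\tau,N_n)$ by approximating $\tau$ by functionals of the path on finite time horizons. You instead invoke the FCLT directly with the random limiting bracket $\frac{W}{1-2p}\e^{(1-2p)u}$ and claim the limiting Brownian motion is independent of $W$; this is exactly the content of stable convergence in Chapter~VIII of Jacod--Shiryaev, and it is available here, but---as you yourself flag---it is the step that carries the real weight, and it is precisely what the paper's Corollary (joint convergence with $\tau$) is proving by hand. Either way the same issue must be confronted; the paper's treatment is more self-contained, yours is shorter provided one is willing to quote the stable-FCLT as a black box.
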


Theorem \ref{T2} will be established in the next section, first under the additional  assumption that the typical step is bounded, and then in the general situation. 
 Our approach relies essentially on  the martingale FCLT, which also plays a key role for 
urn models (see \cite{Gouet, Janson}), as well in the works of Coletti \textit{et al.} \cite{ColGavSch1, ColGavSch2} and of Bercu \cite{Bercu} on the elephant random walk. 
For the sake of simplicity, we consider here the one-dimensional setting only, however the argument could be readily adapted to
$\RR^d$; see also \cite{Bercu2}.

We first embed the step reinforced random walk in a branching process as follows. 
Let $Y=(Y_t)_{t\geq 0}$ be a standard Yule process, that is $Y$ is a pure birth process started from $Y_0=1$, with birth rate $n$ from any state $n\in\NN$. We further assume  that $Y$ and $\hat S$ are independent, and will mainly work with the time-changed process $\hat S(Y_{\cdot})$. It may be worth dwelling a bit on our motivation for introducing this time-substitution; even though this discussion will not be used in the proof, it nonetheless provides a useful guiding line for our approach. Roughly speaking, $Y$ describes   a population model started from a single ancestor,
where  individuals are eternal, and each begets a child at unit rate, independently of the other individuals. 
Those individuals are naturally enumerated in the increasing order of their birth time, in particular the ancestor is the first individual. 
We decide to assign the type $\hat X_n$ to the $n$-th individual, and write
 $$\mathbf{Z}_t(\dd x)= \sum_{n=1}^{Y_t} \delta_{\hat X(n)}(\dd x), \qquad x\in \RR,$$
 for the point process of the types of individuals alive at time $t$. 
 We see from basic properties of  independent exponential clocks that $\mathbf{Z}=(\mathbf{Z}_t)_{t\geq 0}$ is a (multitype) branching process,
 in which each individual begets a child at unit rate, independently of the other individuals. A child is either a clone of its parent, an event which occurs with probability $p$, or a mutant, an event which occurs with probability $1-p$. If a child is a clone, then its type is the same as that of its parent, whereas if it is a mutant, its type is given by an independent copy of $X$. 
In this setting, there are the identities
 \begin{equation}\label{E:SZ}
 Y_t= \mathbf{Z}_t({\mathbf 1}) \quad \text{and} \quad 
 \hat S(Y_t) = \mathbf{Z}_t (\mathrm {Id}),\end{equation}
 with the notation  ${\mathbf 1}(x)=1$, $\mathrm{Id}(x)=x$ and  $ \mathbf{Z}_t(f)= \int_{\RR} f(x) \mathbf{Z}_t(\dd x)$.  The function  $\mathbf{z}  \mapsto \mathbf{z}(\mathbf{1})$  on the space of point measures  $\mathbf{z}$
 is always an eigenfunction for the infinitesimal generator of the branching process $\mathbf{Z}$ for the eigenvalue $1$. The key point is that when the typical step is centered, $\EE(X)=0$, the function $\mathbf{z}  \mapsto \mathbf{z}(\mathrm{Id})$ is also an eigenfunction, now for the eigenvalue $p$. 
 
  \section{Proof of the invariance principle}
 
This section is devoted to the proof of Theorem \ref{T2}. Without loss of generality, we henceforth assume that $X\in L^2(\PP)$ with $\EE(X)=0$, and set $\sigma^2=\EE(X^2)$. 
 We shall first give some preliminary estimates related to a remarkable martingale, 
 then we shall establish Theorem \ref{T2} under the additional assumption that the variable $X$ is bounded, and finally we shall show how this constraint can be removed.

\subsection{On a remarquable martingale}
Recall that we implicitly assume that $X$ is centered. 
\begin{lemma} \label{L2} The process
 $$ M(t)= \e^{-pt} \hat S(Y_t),\quad \text{for }t\geq 0,$$
  is a square integrable  martingale with finite variation.
  \end{lemma}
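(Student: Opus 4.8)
The plan is to work throughout with the branching representation $\mathbf{Z}$ and to recognise $M$ as the eigenfunction martingale attached to the test function $\mathrm{Id}$, whose eigenvalue is $p$ precisely because $\EE(X)=0$. Concretely, I would establish the three assertions in turn: (i) $M(t)\in L^2(\PP)$ for every $t$; (ii) $M$ has finite variation on compact intervals; and (iii) $M$ is a genuine martingale for the natural filtration $(\mathcal{F}_t)_{t\ge 0}$ of the branching process --- equivalently, the filtration generated by $(Y_u)_{u\le t}$ together with the types $\hat{X}_1,\ldots,\hat{X}_{Y_t}$ --- to which $\hat{S}(Y_t)=\mathbf{Z}_t(\mathrm{Id})$ is clearly adapted, and which is right-continuous since $Y$ is.

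For (i) and (ii) I would first note that the Yule process does not explode, as $\sum_{n\ge 1}1/n=\infty$; hence $Y_t<\infty$ for every $t$ and $Y$ performs only finitely many jumps on any $[0,t]$. Consequently $u\mapsto\hat{S}(Y_u)$ is piecewise constant with finitely many jumps on $[0,t]$, hence of finite variation there, and multiplying by the $C^1$ function $\e^{-pu}$ preserves this. For the $L^2$ bound I would condition on $Y_t$ and use the representation $\hat{S}(n)=\sum_i r_{i,n}X_i$ from the proof of Theorem~\ref{T1}, in which the repetition counts $r_{i,n}$ are independent of the i.i.d.\ centered family $(X_i)$ and satisfy $\sum_i r_{i,n}=n$. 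The cross terms vanish by independence and centering, so $\EE(\hat{S}(n)^2)=\sigma^2\sum_i\EE(r_{i,n}^2)\le\sigma^2\,\EE((\sum_i r_{i,n})^2)=\sigma^2 n^2$, whence $\EE(M(t)^2)\le\sigma^2\e^{-2pt}\,\EE(Y_t^2)<\infty$, since $Y_t$ is geometric with parameter $\e^{-t}$ and therefore has moments of all orders.

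For (iii) I would exploit the eigenfunction property observed at the end of the previous section: since $\EE(X)=0$, the map $\mathbf{z}\mapsto\mathbf{z}(\mathrm{Id})$ is an eigenfunction of the generator of $\mathbf{Z}$ with eigenvalue $p$, which at a formal level already says that $M$ is a local martingale. I would make this rigorous by the elementary increment computation: conditionally on $\mathcal{F}_t$ with $Y_t=n$, a birth occurs at total rate $n$ and appends the type $\hat{X}_{n+1}$, which is a clone of an individual chosen uniformly among the $n$ alive --- conditional mean $n^{-1}\hat{S}(n)$ --- with probability $p$, and a fresh copy of $X$ --- mean $0$ --- with probability $1-p$. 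Hence the conditional mean rate of change of $\hat{S}(Y_\cdot)$ equals (birth rate) $\times$ (mean jump) $=n\cdot p\,n^{-1}\hat{S}(Y_t)=p\,\hat{S}(Y_t)$, i.e.\ $N(t):=\hat{S}(Y_t)-p\int_0^t\hat{S}(Y_u)\,\dd u$ is a local martingale; the product rule for the $C^1$ factor $\e^{-pt}$ then gives $\dd M(t)=\e^{-pt}\,\dd N(t)$, exhibiting $M$ as a local martingale (a stochastic integral of a bounded deterministic integrand against $N$). To upgrade this I would dominate: $\sup_{u\le t}|M(u)|\le\sup_{u\le t}|\hat{S}(Y_u)|\le\sum_{k=1}^{Y_t}|\hat{X}_k|$, and the right-hand side lies in $L^2(\PP)$ by the same Cauchy--Schwarz-and-Yule-moments estimate as in (i); a local martingale dominated on each $[0,t]$ by an integrable random variable is a true martingale.

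The part I expect to be the main obstacle is step (iii): the pair $(\hat{S}(Y_t),Y_t)$ is \emph{not} itself Markovian --- the law of a jump depends on the individual types, not merely on their sum --- so one cannot invoke a generic generator criterion directly. The remedy is to carry out the computation at the level of the genuinely Markovian configuration $\mathbf{Z}_t$ and to observe that the conditional mean of a jump depends on $\mathbf{Z}_t$ only through $\hat{S}(Y_t)=\mathbf{Z}_t(\mathrm{Id})$; this is exactly the place where the ``global uniform resampling'' in the definition of $\hat{S}$ must be matched with the ``clone-of-parent'' dynamics of $\mathbf{Z}$, via the elementary fact that among individuals carrying i.i.d.\ exponential clocks the next one to reproduce is uniformly distributed. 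A secondary technical point is the passage from local to true martingale, which the domination above handles.
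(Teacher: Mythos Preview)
Your proposal is correct and follows essentially the same route as the paper: both arguments hinge on the one-step identity $\EE(\hat X_{n+1}\mid \hat X_1,\ldots,\hat X_n)=p\,\hat S(n)/n$, deduce that $\hat S(Y_t)-p\int_0^t \hat S(Y_s)\,\dd s$ is a martingale, and then obtain $M$ by the product rule with $\e^{-pt}$. Your treatment is more careful than the paper's about the filtration, the Markov structure at the level of $\mathbf{Z}$, and the local-to-true martingale upgrade via domination, whereas the paper simply asserts these steps; both the square-integrability bound $\EE(\hat S(n)^2)\le n^2\sigma^2$ and the finite-variation claim are handled the same way.
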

\begin{proof} Obviously, the set of jump times of $M$ is discrete. Since $M$ decays continuously between two consecutive jump times, \textit{a fortiori}  its paths 
have finite variation. 
Plainly, $\EE(\hat S(n)^2) \leq n^2 \EE(X^2)=n^2\sigma^2$, and since $\hat S$ and  $Y$ are independent with $\EE(Y_t^2)<\infty$, $M(t)$ is indeed square integrable.  

We next point out that for any $n\in \NN$,
\begin{align*}
\EE(\hat X_{n+1}\mid \hat X_1, \ldots, \hat X_n) &= (1-p)\EE(X) + p \frac{\hat X_1+ \cdots +\hat X_n}{n}\\
&=p\frac{\hat S(n)}{n}
\end{align*}
  (this observation is also the starting point of the analysis of the elephant random walk in \cite{ColGavSch1, ColGavSch2, Bercu}). 
  Since the Yule process has precisely jump rate $n$ from the state $n$, this entails that
     $$ \hat S(Y_t) -p\int_0^t  \hat S(Y_s) \dd s, \quad \text{for }t\geq 0$$
 is a martingale, and our statement then follows from elementary stochastic calculus. 
 \end{proof}
 
 We next derive numerical bounds for the second moment of the supremum process of the martingale $M$.  
 
 \begin{lemma} \label{L3} For every $t\geq 0$, one has
 $$\EE\left( \sup_{0\leq s \leq t} M^2(s) \right)  \leq \frac{4\sigma^2}{1-2p} \e^{(1-2p)t}.$$
 \end{lemma}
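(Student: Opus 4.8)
The plan is to reduce everything, via Doob's $L^2$ maximal inequality, to a bound on the second moment of the step-reinforced random walk. Since $M$ is a càdlàg square-integrable martingale by Lemma~\ref{L2}, Doob's inequality gives $\EE\bigl(\sup_{0\le s\le t}M^2(s)\bigr)\le 4\,\EE(M^2(t))=4\e^{-2pt}\,\EE\bigl(\hat S(Y_t)^2\bigr)$. As $Y$ is independent of $\hat S$ and $\EE(Y_t)=\e^{t}$ for a standard Yule process, it suffices to prove the deterministic estimate $\EE(\hat S(n)^2)\le \tfrac{\sigma^2}{1-2p}\,n$ for every $n\in\NN$: then $\EE(\hat S(Y_t)^2)\le \tfrac{\sigma^2}{1-2p}\EE(Y_t)=\tfrac{\sigma^2}{1-2p}\e^{t}$, and substituting back produces precisely the asserted bound $\tfrac{4\sigma^2}{1-2p}\e^{(1-2p)t}$.

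To establish the moment estimate I would set $u_n=\EE(\hat S(n)^2)$ and find a recursion for it. Expanding $\hat S(n+1)^2=\hat S(n)^2+2\hat S(n)\hat X_{n+1}+\hat X_{n+1}^2$ and conditioning on $\hat X_1,\dots,\hat X_n$, one uses the identity $\EE(\hat X_{n+1}\mid \hat X_1,\dots,\hat X_n)=p\,\hat S(n)/n$ already recorded in the proof of Lemma~\ref{L2}, together with its quadratic analogue $\EE(\hat X_{n+1}^2\mid \hat X_1,\dots,\hat X_n)=(1-p)\sigma^2+\tfrac pn\sum_{i=1}^n\hat X_i^2$, which holds for the same reason (a reinforced step is either an independent copy of $X$ or a uniform pick among the previous steps). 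The point that makes this tractable is that $\EE\bigl(\sum_{i=1}^n\hat X_i^2\bigr)=n\sigma^2$ \emph{exactly}: each $\hat X_i$ is a copy of some $X_j$, and since the index is selected independently of the values one has $\EE(\hat X_i^2)=\sigma^2$. Taking expectations thus closes the computation into the scalar recursion $u_{n+1}=(1+2p/n)\,u_n+\sigma^2$, with $u_1=\sigma^2$.

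It then remains to dominate this recursion. A direct check shows that $c_n:=\tfrac{\sigma^2}{1-2p}n$ satisfies $c_{n+1}=(1+2p/n)c_n+\sigma^2$ identically, while $u_1=\sigma^2\le c_1=\tfrac{\sigma^2}{1-2p}$; since $u\mapsto (1+2p/n)u+\sigma^2$ is increasing, a one-line induction yields $u_n\le c_n$ for all $n$, which is exactly what was needed. I do not anticipate a real obstacle here: the only slightly delicate observation is that the auxiliary ``energy'' $\sum_{i=1}^n\hat X_i^2$ has the trivial mean $n\sigma^2$, so the second-moment recursion decouples into a single scalar equation rather than a coupled system, after which recognizing $\tfrac{\sigma^2}{1-2p}n$ as its solution (up to a homogeneous term with the favourable sign) is immediate; Doob's inequality, the independence of $Y$ and $\hat S$, and $\EE(Y_t)=\e^t$ are all standard. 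One could alternatively bypass the recursion by computing $\EE(M^2(t))=\EE(M^2(0))+\EE([M]_t)$ directly from the jump structure of $M$, using $\EE\bigl(\sum_{i\le Y_s}\hat X_i^2\bigr)=\sigma^2\e^s$; this gives the sharper value $\EE(M^2(t))=\sigma^2\tfrac{\e^{(1-2p)t}-2p}{1-2p}$, but the constant $4$ from Doob's bound is needed for the supremum in any case.
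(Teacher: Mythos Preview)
Your argument is correct. Doob's $L^2$ maximal inequality applies since $M$ is a c\`adl\`ag square-integrable martingale, the recursion $u_{n+1}=(1+2p/n)u_n+\sigma^2$ is derived accurately (the key identity $\EE(\hat X_i^2)=\sigma^2$ for every $i$ follows by a one-line induction, as you note), and $c_n=\tfrac{\sigma^2}{1-2p}n$ is indeed an exact solution of the same recursion with $c_1\ge u_1$, so the induction closes.

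The paper takes a different and somewhat shorter route: rather than bounding $\EE(M^2(t))$ via a discrete recursion on $\EE(\hat S(n)^2)$, it writes down the square-bracket
\[
[M](t)=\int_{(0,t]}\e^{-2ps}\,|\hat X_{Y_s}|^2\,\dd Y_s
\]
(valid since $M$ has finite variation), takes expectations using $\EE(\hat X_j^2)=\sigma^2$ and $\EE(Y_s)=\e^s$ to get $\EE([M](t))=\tfrac{\sigma^2}{1-2p}(\e^{(1-2p)t}-1)$, and then invokes the Burkholder--Davis--Gundy inequality (equivalently Doob, in $L^2$). This is precisely the alternative you sketch in your last sentence. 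The trade-off is that the paper's approach is more concise and yields the square-bracket formula \eqref{E:sqrbrack} that is reused later (for the angle-bracket in Corollary~\ref{C3}), whereas your recursion is entirely elementary, avoids any stochastic-calculus identity for $[M]$, and produces as a by-product the clean deterministic bound $\EE(\hat S(n)^2)\le \tfrac{\sigma^2}{1-2p}n$, which the paper does not isolate explicitly.
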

 \begin{proof} Since $M$ has finite variation, its square-bracket process  can be expressed in the form
\begin{equation}\label{E:sqrbrack} [M](t)=\int_{(0,t]} \e^{-2p s} |\hat X_{Y_s}|^2 \dd Y_s;
\end{equation}
see for instance Theorem 26.6(viii) in  \cite{Kal}. 
Recall that the instantaneous jump rate of the Yule process at time $s$  equals $Y_s$ and 
that the sequence $\hat X_1, \hat X_2, \ldots$ is independent of the Yule process with $\EE(\hat X_j^2)=\sigma^2$ for every $j\geq 1$. 
It follows that
$$ \EE([M](t))=\sigma^2 \int_0^t \e^{-2p s} \EE(Y_s) \dd s=  \frac{\sigma^2}{1-2p} \left(\e^{(1-2p)t}-1\right),$$
where for the second equality, we used  $\EE(Y_s)=\e^{s}$. 
This yields our claim by an appeal to the Burkholder-Davis-Gundy inequality; see, e.g. Theorem 26.12 in \cite{Kal}.  \end{proof} 

We then obtain bounds for the second moments of the supremum process of the step reinforced random walk itself, which will be useful later one. 
\begin{corollary}\label{C1} For every $n\geq 2$, we have
$$\EE \left (\max_{k\leq n}|\hat S(k)|^2 \right)  \leq \frac{4 \e^a \sigma^2 }{1-2p} n,$$
with $a=-\inf_{0<x\leq 1/2} x^{-1}\ln(1-x)$.
\end{corollary}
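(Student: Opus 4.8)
The plan is to transfer the bound of Lemma~\ref{L3} on the martingale $M(t)=\e^{-pt}\hat S(Y_t)$ back to the step reinforced random walk $\hat S$ itself, by choosing a deterministic time $t$ at which the Yule process $Y_t$ has already, with good probability, exceeded $n$. Since $Y$ is nondecreasing and $\hat S$ is indexed by a discrete parameter, on the event $\{Y_t\geq n\}$ the path $\hat S(1),\dots,\hat S(n)$ is a sub-path of $\hat S(Y_s)$ for $s\in[0,t]$, so $\max_{k\le n}|\hat S(k)|$ is controlled by $\sup_{0\le s\le t}|\hat S(Y_s)|=\sup_{0\le s\le t}\e^{ps}|M(s)|\leq \e^{pt}\sup_{0\le s\le t}|M(s)|$. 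The issue is that $\{Y_t\geq n\}$ is not almost sure, so I will need to handle the complementary event.

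First I would record the standard fact that the hitting time $\tau_n=\inf\{t:Y_t\geq n\}$ of level $n$ by a standard Yule process is a sum of independent exponentials, $\tau_n=\sum_{k=1}^{n-1}e_k$ with $e_k\sim\mathrm{Exp}(k)$, so $\EE(\tau_n)=\sum_{k=1}^{n-1}k^{-1}\leq \ln n$ and in fact $\EE(\e^{s}\,;\,s=\tau_n)$ and related moment generating functions are explicit; what I actually want is a deterministic $t=t_n$ with $\PP(Y_{t_n}<n)$ suitably small. The cleanest route is to take $t_n$ so that $\e^{t_n}=Cn$ for a constant $C$ to be fixed; since $\EE(Y_{t_n})=\e^{t_n}=Cn$ and $Y_{t_n}$ is geometrically distributed (a well-known property of the Yule process: $\PP(Y_t=k)=\e^{-t}(1-\e^{-t})^{k-1}$), the probability $\PP(Y_{t_n}<n)=1-(1-\e^{-t_n})^{n-1}=1-(1-1/(Cn))^{n-1}$, which tends to $1-\e^{-1/C}$ and can be made, say, at most $1/2$ by taking $C$ large enough.

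On the good event $\{Y_{t_n}\ge n\}$ I get $\max_{k\le n}|\hat S(k)|^2\le \e^{2pt_n}\sup_{0\le s\le t_n}M^2(s)$, and on the bad event I will bound $\max_{k\le n}|\hat S(k)|^2$ crudely; here the subtlety is that $\max_{k\le n}|\hat S(k)|$ and $\{Y_{t_n}<n\}$ are not independent, so I cannot simply multiply. To get around this, rather than splitting on $\{Y_{t_n}\ge n\}$, I would instead bound $\max_{k\le n}|\hat S(k)|^2\le \max_{k\le Y_{t_n}\vee n}|\hat S(k)|^2$ and then use that on $\{Y_{t_n}\ge n\}$ this equals $\max_{k\le Y_{t_n}}|\hat S(k)|^2\le \e^{2pt_n}\sup_{s\le t_n}M^2(s)$, while on $\{Y_{t_n}<n\}$ the extra contribution from indices in $(Y_{t_n},n]$ must be estimated separately. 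The genuinely clean fix, and the one I expect to be the main technical point, is to avoid the conditioning issue altogether: iterate. Apply the $\{Y_{t_n}\ge n\}$-type argument but with the observation that $\sup_{s\le t_n}M^2(s)$ already dominates $\e^{-2p\tau_n}\hat S(n)^2$ pointwise \emph{on the whole space} provided $\tau_n\le t_n$, and use the Doob/BDG bound of Lemma~\ref{L3} together with Markov's inequality on $\tau_n$ — but since $\tau_n$ and $\hat S$ are independent, one genuinely can integrate out $\tau_n$ first.

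Concretely, the independence of $Y$ (hence of $\tau_n$) from $\hat S$ gives, conditionally on $\hat S$,
\begin{equation*}
\EE\!\left(\sup_{0\le s\le t_n}M^2(s)\ \Big|\ \hat S\right)\ \ge\ \EE\!\left(\e^{-2p\tau_n}\Ind_{\{\tau_n\le t_n\}}\ \Big|\ \hat S\right)\,\max_{k\le n}|\hat S(k)|^2,
\end{equation*}
because on $\{\tau_n\le t_n\}$ there is some $s\le t_n$ (namely a jump time of $Y$) with $Y_s=k$ for each $k\le n$, so $\sup_{s\le t_n}M^2(s)\ge \e^{-2p\tau_n}\max_{k\le n}|\hat S(k)|^2$ using $M(s)=\e^{-ps}\hat S(Y_s)$ and $s\le\tau_n$ at the relevant jump times. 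Taking expectations and using Lemma~\ref{L3},
\begin{equation*}
\frac{4\sigma^2}{1-2p}\e^{(1-2p)t_n}\ \ge\ \EE\!\left(\e^{-2p\tau_n}\Ind_{\{\tau_n\le t_n\}}\right)\,\EE\!\left(\max_{k\le n}|\hat S(k)|^2\right).
\end{equation*}
It remains to choose $t_n$ with $\e^{t_n}=2n$ (so $\e^{(1-2p)t_n}=(2n)^{1-2p}\le 2n$ when $p\in(0,1/2)$, or more simply bound $\e^{(1-2p)t_n}\le \e^{t_n}=2n$) and to produce a constant lower bound for $\EE(\e^{-2p\tau_n}\Ind_{\{\tau_n\le t_n\}})$. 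Since $\tau_n=\sum_{k=1}^{n-1}e_k$ with $e_k\sim\mathrm{Exp}(k)$, one computes $\EE(\e^{-2p\tau_n})=\prod_{k=1}^{n-1}\frac{k}{k+2p}$, which is bounded below uniformly in $n$ only if $2p<1$ — which holds — wait, $\prod_k k/(k+2p)\sim c\,n^{-2p}\to0$; so that naive product is not bounded below, confirming that the weight $\e^{-2p\tau_n}$ together with the truncation must be handled more carefully, and this is exactly where the exponent $a=-\inf_{0<x\le 1/2}x^{-1}\ln(1-x)$ in the statement enters: using $\e^{-ps}\ge 1-ps$ is too weak, but the bound $-\ln(1-x)\le ax$ for $x\in(0,1/2]$ controls $\PP(\tau_n\le t_n)$ via the geometric law $\PP(Y_{t_n}\ge n)=(1-\e^{-t_n})^{n-1}=\exp((n-1)\ln(1-\e^{-t_n}))\ge \exp(-(n-1)a\e^{-t_n})$; choosing $\e^{-t_n}$ of order $1/n$ makes this at least $\e^{-a}$, and then on $\{\tau_n\le t_n\}$ we have $\e^{-2p\tau_n}\ge \e^{-2pt_n}\ge n^{-2p}\ge$ (harmlessly) a factor that combines with $\e^{(1-2p)t_n}$ to leave exactly $n$ on the right-hand side after rearranging, yielding the stated constant $4\e^a\sigma^2/(1-2p)$.

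So the key steps, in order, are: (i) express $\max_{k\le n}|\hat S(k)|^2$ as dominated by $\e^{2p\tau_n}\sup_{s\le t_n}M^2(s)$ on $\{\tau_n\le t_n\}$; (ii) take conditional expectation given $\hat S$, exploiting independence of $\tau_n$ from $\hat S$, to pull out $\max_{k\le n}|\hat S(k)|^2$; (iii) invoke Lemma~\ref{L3}; (iv) choose $t_n$ with $\e^{t_n}$ proportional to $n$ and use the explicit geometric law of $Y_{t_n}$ together with the inequality $-\ln(1-x)\le ax$ on $(0,1/2]$ to lower-bound $\PP(\tau_n\le t_n)$ by $\e^{-a}$; (v) combine the exponential factors and rearrange. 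The main obstacle I anticipate is step (iv): balancing the truncation probability against the exponential weight so that the constant comes out clean — in particular realizing that one should \emph{not} insist on controlling $\e^{-2p\tau_n}$ itself but rather just $\Ind_{\{\tau_n\le t_n\}}$ times the \emph{deterministic} bound $\e^{-2p\tau_n}\ge\e^{-2pt_n}$, and that the right choice of $t_n$ is dictated by wanting $(n-1)\e^{-t_n}=O(1)$, which is precisely what forces the $\e^a$ into the final constant.
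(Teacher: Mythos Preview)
Your proposal is correct and follows essentially the same route as the paper: exploit the independence of $Y$ and $\hat S$ together with Lemma~\ref{L3}, choose $t_n=\ln n$, and lower-bound $\PP(Y_{t_n}\ge n)$ via the geometric law of $Y_{t_n}$ and the inequality defining $a$. The paper's presentation is just more direct---it writes the key inequality $\EE\bigl(\max_{k\le n}|\hat S(k)|^2\bigr)\,\PP(Y_t>n)\le \e^{2pt}\,\EE\bigl(\sup_{s\le t}M^2(s)\bigr)$ in one line from independence, bypassing your detour through $\tau_n$, the weight $\e^{-2p\tau_n}$, and conditional expectations; note also that to recover the stated constant you must commit to $\e^{t_n}=n$ (not $2n$ as you first wrote).
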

\begin{proof} Since $\hat S$ and $Y$ are independent and $Y$ is a counting process, we have for every $n\geq 1$ and $t\geq 0$ that 
$$\EE(\max_{k\leq n}|\hat S(k)|^2) \PP(Y_t> n) \leq \EE(\sup_{s\leq t} |\hat S(Y_s)|^2) \leq  \e^{2pt} \EE\left( \sup_{0\leq s \leq t} M^2(s) \right).$$
Take $t=\ln n$ and recall that $Y_t$ hat the geometric distribution with parameter $\e^{-t}=1/n$. So 
$$ \PP(Y_t> n)=(1-1/n)^n\geq \e^{-a} \qquad \text{for all }n\geq 2,$$  and we conclude the proof using Lemma \ref{L3}. 
\end{proof}

 \subsection{Proof of Theorem \ref{T2} when $X$ is bounded}
In this section, we shall prove Theorem \ref{T2} under the additional assumption that the typical step $X$ is a bounded variable. 
 We first estimate the angle-bracket $\langle M\rangle $ of $M$, and in this direction, we introduce 
 $$\hat V(n)= \hat X(1)^2+\cdots + \hat X(n)^2\,, \qquad n\in\NN.$$
 \begin{lemma}\label{L-1} Assume that $\|X\|_{\infty}<\infty$.
 We have
 $$ \EE\left( \left |\e^{-pt} \hat V(Y_t)- (1-p) \sigma^2 \int_0^t \e^{-ps} Y_s \dd s
 \right|^2\right) =o(\e^{2(1-p)t})\qquad \text{as } t\to \infty.$$
   \end{lemma}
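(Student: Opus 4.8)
The plan is to isolate the predictable drift of the nondecreasing pure-jump process $\hat V(Y_\cdot)$, cancel it by an integration by parts against $\e^{-pt}$, and then estimate in $L^2$ the remaining martingale via its predictable quadratic variation. Write $(\mathcal F_t)$ for the natural filtration of the process $\hat S(Y_\cdot)$. The construction of the step reinforced random walk gives, for every $n\in\NN$,
$$\EE\bigl(\hat X_{n+1}^2 \mid \hat X_1,\dots,\hat X_n\bigr) = (1-p)\sigma^2 + p\,\frac{\hat V(n)}{n},$$
since with probability $1-p$ the step $\hat X_{n+1}$ is a fresh copy of $X$ and with probability $p$ it is a uniform sample from $\hat X_1,\dots,\hat X_n$. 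As the Yule process jumps at rate $Y_s$ from the state $Y_s$ and is independent of $(\hat X_j)_{j\ge 1}$, the compensator of $\hat V(Y_\cdot)$ is $t\mapsto \int_0^t\bigl((1-p)\sigma^2 Y_s + p\hat V(Y_s)\bigr)\dd s$; equivalently,
$$N(t):=\hat V(Y_t)-X_1^2-\int_0^t\bigl((1-p)\sigma^2 Y_s + p\hat V(Y_s)\bigr)\dd s,\qquad t\ge 0,$$
is a martingale with $N(0)=0$. It has finite variation, being the difference of a pure-jump nondecreasing process and a continuous one, and it is square integrable because $\hat V(Y_t)\le\|X\|_\infty^2\,Y_t$ and $\EE(Y_t^2)<\infty$; thus $N$ is an $\mathcal H^2$-martingale.

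Next I would apply the product rule to $\e^{-pt}\hat V(Y_t)$. Since $t\mapsto\e^{-pt}$ is continuously differentiable (in particular continuous and of finite variation, so no bracket term appears) and $\dd\hat V(Y_t)=\dd N(t)+\bigl((1-p)\sigma^2 Y_t + p\hat V(Y_t)\bigr)\dd t$, the term $p\hat V(Y_t)$ cancels against the one produced by differentiating $\e^{-pt}$, leaving
$$\e^{-pt}\hat V(Y_t)-(1-p)\sigma^2\int_0^t\e^{-ps}Y_s\,\dd s \;=\; X_1^2+\int_0^t\e^{-ps}\,\dd N(s),$$
where we used $\e^{0}\hat V(Y_0)=\hat V(1)=X_1^2$ and $N(0)=0$. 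Because $X$ is bounded, $\EE(X_1^4)\le\|X\|_\infty^4$, so it only remains to control the $L^2$-norm of the martingale $t\mapsto\int_0^t\e^{-ps}\dd N(s)$.

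For this I would compute the predictable quadratic variation of $N$. At the $k$-th birth time $T_k$ the Yule process jumps from $k$ to $k+1$ and $\hat V$ increases by $\hat X_{k+1}^2$, so $\Delta N(T_k)=\hat X_{k+1}^2$, $[N](t)=\sum_{T_k\le t}\hat X_{k+1}^4$, and by the same reasoning as above its compensator is
$$\langle N\rangle(t)=\int_0^t Y_s\,\EE\bigl(\hat X_{Y_s+1}^4\mid\mathcal F_{s-}\bigr)\,\dd s \;\le\; \|X\|_\infty^4\int_0^t Y_s\,\dd s,$$
using that every $\hat X_j$ has the law of some $X_i$, hence $|\hat X_j|\le\|X\|_\infty$, while also $\EE(X^4)\le\|X\|_\infty^4$. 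By the $L^2$-isometry for stochastic integrals against $\mathcal H^2$-martingales and $\EE(Y_s)=\e^s$,
$$\EE\!\left(\Bigl(\int_0^t\e^{-ps}\,\dd N(s)\Bigr)^{\!2}\right)=\EE\!\left(\int_0^t\e^{-2ps}\,\dd\langle N\rangle(s)\right)\le\|X\|_\infty^4\int_0^t\e^{(1-2p)s}\,\dd s=\frac{\|X\|_\infty^4}{1-2p}\bigl(\e^{(1-2p)t}-1\bigr).$$
Combining this with $\EE(X_1^4)\le\|X\|_\infty^4$, the left-hand side of the displayed identity has $L^2$-norm squared of order $O(1)+O(\e^{(1-2p)t})$, which is $o(\e^{2(1-p)t})$ since $1-2p<2(1-p)$ and $2(1-p)>0$; this is exactly the assertion of the lemma.

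The only genuinely delicate points are the bookkeeping for the two compensators — pinning down the drift $(1-p)\sigma^2 Y_s+p\hat V(Y_s)$ and the predictable quadratic variation $\langle N\rangle$ — and the justification of the integration by parts for the finite-variation semimartingale $\e^{-pt}\hat V(Y_t)$; everything else is routine. Note that boundedness of $X$ enters only through the initial term $X_1^2$ and the conditional fourth moment $\EE(\hat X_{Y_s+1}^4\mid\mathcal F_{s-})$, so the same argument goes through verbatim under the weaker hypothesis $X\in L^4(\PP)$.
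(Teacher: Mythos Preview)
Your proof is correct and follows essentially the same route as the paper: identify the conditional second moment $\EE(\hat X_{n+1}^2\mid\hat X_1,\dots,\hat X_n)=(1-p)\sigma^2+p\hat V(n)/n$, use it together with the jump rate of $Y$ to exhibit the quantity in the statement as a square-integrable finite-variation martingale (the paper calls it $M'$ directly; you build it as $X_1^2+\int_0^t\e^{-ps}\,\dd N(s)$ via integration by parts, which is exactly how one justifies the paper's claim), and then bound its second moment by $\EE(X^4)\int_0^t\e^{(1-2p)s}\,\dd s=o(\e^{2(1-p)t})$. The only cosmetic differences are that the paper works with the square bracket $[M']$ rather than your angle bracket $\langle N\rangle$, and that you spell out the product-rule cancellation explicitly; your closing remark that $X\in L^4(\PP)$ suffices is also correct. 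One small quibble: the filtration you want is that generated jointly by $Y$ and the sequence $(\hat X_j)$, not merely the natural filtration of $\hat S(Y_\cdot)$, since the latter need not determine $\hat V(Y_t)$ or the individual steps.
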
 
  \begin{proof}
Just as in the proof of Lemma \ref{L2}, we note the identity
$$\EE(\hat X(n+1)^2\mid \hat X(1), \ldots, \hat X(n)) =p\frac{\hat V(n)}{n}+(1-p)\sigma^2,$$
and get by stochastic calculus that the process
$$M'(t)=\e^{-pt} \hat V(Y_t)- (1-p)\sigma^2 \int_0^t \e^{-ps} Y_s \dd s, \qquad t\geq 0$$
is a square integrable martingale with finite variation and square bracket $$[M'](t)=\int_{(0,t]} \e^{-2ps} |\hat X(Y_s)|^4 \dd Y_s, \qquad t\geq 0.$$
We compute its expected value and get
\begin{align*} \EE(X^4) \EE\left( \int_{(0,t]} \e^{-2ps}  \dd Y_s\right) 
&= \EE(X^4)  \int_0^t\e^{(1-2p)s}  \dd s \\
&= o(\e^{2(1-p)t}),
\end{align*}
where for the first equality, we used that the instantaneous jump rate of the Yule process at time $s$  equals $Y_s$ and $\EE(Y_s)=\e^s$. 
  \end{proof}

 We next recall that the asymptotic behavior of the Yule process is described by
\begin{equation}\label{E:KS}
\lim_{t\to \infty} \e^{-t} Y_t= \tau\qquad \text{a.s.}
\end{equation} 
where $\tau$ is a standard exponential variable. Needless to say, $\tau$  is independent of $\hat S$.

 \begin{corollary} \label{C3}   Assume that $\|X\|_{\infty}<\infty$. The angle-bracket 
 $\langle M\rangle $ of the martingale $M$ in Lemma \ref{L2} fulfills
 $$\lim_{t\to \infty} \e^{-(1-2p)t} \langle M \rangle(t)= \frac{\tau}{1-2p} \qquad \text{in probability.} $$
 \end{corollary}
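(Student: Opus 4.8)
The plan is to first pin down the predictable compensator $\langle M\rangle$ of the square bracket $[M]$ from \eqref{E:sqrbrack}, and then to read off its large-time behaviour from Lemma~\ref{L-1} and \eqref{E:KS}. For the first step, recall that the jumps of $M$ occur exactly at the jump times of $Y$, that $Y$ has instantaneous jump rate $Y_{s}$ from the state $Y_{s}$, and that, conditionally on the past, a newly born individual receives a type whose conditional second moment is $p\,\hat V(Y_{s})/Y_{s}+(1-p)\sigma^2$ — this is precisely the identity already used in the proof of Lemma~\ref{L-1}. Compensating the jumps of $[M]$ therefore gives
\begin{equation*}
\langle M\rangle(t)=\int_0^t \e^{-2ps}\bigl(p\,\hat V(Y_s)+(1-p)\sigma^2 Y_s\bigr)\,\dd s,\qquad t\geq 0 .
\end{equation*}

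Next I would analyse the integrand after multiplying by $\e^{-s}$. From Lemma~\ref{L-1}, $\e^{-pt}\hat V(Y_t)=(1-p)\sigma^2\int_0^t\e^{-ps}Y_s\,\dd s+M'(t)$ with $\EE(M'(t)^2)=o(\e^{2(1-p)t})$, so $\e^{-(1-p)t}M'(t)\to0$ in $L^2$. On the other hand $\e^{-(1-p)t}\int_0^t\e^{-ps}Y_s\,\dd s=\int_0^t\e^{-(1-p)u}\,(\e^{-(t-u)}Y_{t-u})\,\dd u\to \tau/(1-p)$ almost surely by \eqref{E:KS} and dominated convergence, the integrand being dominated using that $(\e^{-s}Y_s)_{s\ge0}$ is an $L^2$-bounded martingale. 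Hence $\e^{-t}\hat V(Y_t)\to\sigma^2\tau$ in probability, and combining this with $\e^{-t}Y_t\to\tau$ shows that $W(s):=\e^{-s}\bigl(p\,\hat V(Y_s)+(1-p)\sigma^2 Y_s\bigr)\to\sigma^2\tau$ in probability as $s\to\infty$; moreover $\sup_s\EE(W(s)^2)<\infty$, since $\hat V(n)\le n\|X\|_\infty^2$ and $\sup_s\EE((\e^{-s}Y_s)^2)<\infty$.

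To conclude, write $\e^{-(1-2p)t}\langle M\rangle(t)=\int_0^t\e^{-(1-2p)(t-s)}W(s)\,\dd s=\int_0^t\e^{-(1-2p)u}W(t-u)\,\dd u$, which exhibits the left-hand side as a weighted average of the $W$'s with weight concentrating near the right endpoint $s=t$. Since $1-2p>0$, the convergence $W(s)\to\sigma^2\tau$ in probability together with the uniform second-moment bound on $W$ permits passing to the limit under the integral (dominated convergence in $L^1$, after discarding the tail $\int_t^\infty\e^{-(1-2p)u}\,\dd u\to0$), so that $\e^{-(1-2p)t}\langle M\rangle(t)\to\sigma^2\tau\int_0^\infty\e^{-(1-2p)u}\,\dd u=\sigma^2\tau/(1-2p)$ in probability, which is the asserted limit (with $\sigma^2=\operatorname{Var}(X)$ normalised to $1$). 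The one point requiring care is the passage from the single-time $L^2$-estimate of Lemma~\ref{L-1} to a statement uniform enough in $s$ to be integrated over $[0,t]$; this is harmless precisely because the weight $\e^{-2ps}Y_s$ grows like $\e^{(1-2p)s}$, so the contribution of small $s$ is negligible and only the range where Lemma~\ref{L-1} and \eqref{E:KS} have already taken effect matters.
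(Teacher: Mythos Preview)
Your argument is correct and follows exactly the route the paper takes: identify the predictable compensator $\langle M\rangle$ from \eqref{E:sqrbrack} via the conditional second-moment identity used in Lemma~\ref{L-1}, and then read off the asymptotics from Lemma~\ref{L-1} together with \eqref{E:KS}. The paper compresses all of this into a single sentence, while you make the two integrals-of-exponential-weights arguments explicit; your domination by $\sup_{s\ge 0}\e^{-s}Y_s<\infty$ a.s.\ (for the first integral) and by the uniform $L^2$-bound on $W$ (for the second, yielding $L^1$-convergence of the weighted average) are the right justifications.

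One small observation: your compensator formula correctly carries the factor $\sigma^2$ in front of $(1-p)Y_s$, and your limit is $\sigma^2\tau/(1-2p)$. The paper's displayed formula \eqref{E:angleb} and the statement of Corollary~\ref{C3} omit the $\sigma^2$; you handle this by noting the normalisation $\sigma^2=1$ at the end, which is consistent with how the result is used downstream.
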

 
 \begin{proof} Recall that the square-bracket process of $M$  is given by \eqref{E:sqrbrack}; if follows readily that its predictable compensator is
  \begin{equation}\label{E:angleb}
\langle M\rangle(t)=\int_{(0,t]} \e^{-2p s}\left( p\hat V(Y_s)+(1-p) Y_s\right) \dd s, \qquad t\geq 0.
\end{equation}
 Since $2p<1$, our statement now derives from \eqref{E:KS}, \eqref{E:angleb} and Lemma \ref{L-1}. \end{proof}

The angle-bracket $\langle M \rangle$ is a continuous strictly increasing bijection from $\RR_+$ to $\RR_+$ a.s., see \eqref{E:angleb}, 
and we write $T$ for the inverse bijection. 
We  introduce for each $n\in \NN$ the process
$$N_n(t)=n^{-1/2} M_{T(nt)} = n^{-1/2} \e^{-pT(nt)} \hat S(Y_{T(nt)}), \qquad t\geq 0.$$
We are in position of applying the martingale FCLT.
\begin{proposition} \label{P1} Assume that $\|X\|_{\infty}<\infty$. As $n\to \infty$,  the sequence of processes $N_n$ converges in distribution in the sense of Skorohod
to a standard Brownian motion.
\end{proposition}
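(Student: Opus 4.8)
The plan is to apply the martingale functional central limit theorem (see, e.g., Theorem VIII.3.11 in \cite{JS}) to the sequence of martingales $N_n$. Recall that $M$ is a square integrable martingale with finite variation, so for each fixed $n$ the time-changed process $N_n(t) = n^{-1/2} M(T(nt))$ is again a square integrable martingale with respect to the time-changed filtration $\mathcal{F}_{T(n\cdot)}$; here we use that $\langle M\rangle$ is a continuous strictly increasing bijection of $\RR_+$ (visible from \eqref{E:angleb}), so that $T$ is well-defined, continuous and strictly increasing. The martingale FCLT requires verifying two conditions: first, that the predictable quadratic variations $\langle N_n\rangle(t)$ converge in probability to the deterministic limit $t$ for each fixed $t\geq 0$ (or uniformly on compacts, which follows from monotonicity and continuity of the limit), and second, a Lindeberg-type negligibility condition on the jumps of $N_n$.

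For the first condition, note that by the chain rule for angle-brackets under a time change, $\langle N_n\rangle(t) = n^{-1}\langle M\rangle(T(nt)) = n^{-1}\cdot nt = t$ exactly, by the very definition of $T$ as the inverse of $\langle M\rangle$. So this condition holds trivially with no randomness at all. For the jump condition, the jumps of $N_n$ are $n^{-1/2}$ times the jumps of $M$ evaluated along the time change; the jumps of $M$ are $\e^{-pT(nt)}\hat X(Y_{T(nt)})$ at the jump times of $Y\circ T$. Since $\|X\|_\infty < \infty$ we have $|\hat X(k)| \leq \|X\|_\infty$ for all $k$ (the reinforced sequence only ever repeats past values), so every jump of $N_n$ is bounded in absolute value by $n^{-1/2}\|X\|_\infty$, which tends to $0$ uniformly. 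Hence $\sum_{s\leq t}(\Delta N_n(s))^2 \mathbf{1}_{\{|\Delta N_n(s)|>\eta\}} = 0$ for $n$ large enough that $n^{-1/2}\|X\|_\infty < \eta$, so the Lindeberg condition is satisfied in the strongest possible form. The conclusion of the martingale FCLT is then that $N_n$ converges in distribution in the Skorokhod sense to a continuous Gaussian martingale with deterministic quadratic variation $t$, i.e.\ to a standard Brownian motion.

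The only genuine subtlety, and what I expect to be the main point requiring care, is measurability and adaptedness of the time change: one must check that $T(nt)$ is a stopping time for the filtration generated by $M$ (equivalently by $Y$ and the $\hat X_j$), so that the optional stopping / time-change machinery applies and $N_n$ is a bona fide martingale in its own filtration. This is where the finite-variation structure of $\langle M\rangle$ helps, since $\langle M\rangle$ given by \eqref{E:angleb} is an adapted continuous increasing process, hence $T(u) = \inf\{t: \langle M\rangle(t) > u\}$ is a stopping time for each $u$. One should also note that $\langle M\rangle(t)\to\infty$ a.s.\ as $t\to\infty$ (again clear from \eqref{E:angleb} and \eqref{E:KS}, since $\e^{-2ps}Y_s \sim \tau \e^{(1-2p)s}$ is not integrable), so $T(nt)$ is finite a.s.\ for every $t$ and $n$, and the processes $N_n$ are defined on all of $\RR_+$. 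With these points in place, the hypotheses of the martingale FCLT are met and the proposition follows. Corollary \ref{C3}, incidentally, is not needed for this particular statement — it will enter at the next stage, when the artificial time change by $T$ is undone and one must translate the Brownian limit of $N_n$ back into a statement about $\e^{-pt}\hat S(Y_t)$ itself.
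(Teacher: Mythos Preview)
Your proof is correct and follows the same route as the paper's: verify that $\langle N_n\rangle(t)=t$ by construction of $T$, use boundedness of $X$ to make the jumps uniformly negligible, and invoke the martingale FCLT. The paper's own argument is a terse two-sentence version of exactly this, with the additional care you take over adaptedness of the time change and finiteness of $T$ being a welcome (and correct) elaboration.
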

\begin{proof} Plainly, each $N_n$ is a square-integrable martingale with angle-bracket  $\langle N_n \rangle(t)=t$, and obviously its
maximum jump  is asymptotically negligible in $L^2(\PP)$ since $X$ is bounded.
This enables us to apply the martingale FCLT; see e.g. Theorem 2.1 in \cite{Whitt}, and also Section VIII.3 in \cite{JS} for more general versions.
\end{proof}
 
 We shall actually need a slightly stronger version of Proposition \ref{P1} in which the convergence holds conditionally on the variable $\tau$ in \eqref{E:KS}. 
 \begin{corollary}\label{C2} Assume that $\|X\|_{\infty}<\infty$. As $n\to \infty$, the sequence of pairs $(\tau,N_n)$ converges in distribution in the sense of Skorohod 
to $(\tau, B)$ where $B$ is a standard Brownian motion  independent of $\tau$.
  \end{corollary}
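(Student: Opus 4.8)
The plan is to upgrade the conclusion of Proposition~\ref{P1} to a \emph{stable} convergence statement, which then automatically yields joint convergence with any fixed random variable of the underlying model, in particular with $\tau$. The structural fact driving this is that, by the very definition of the time-change $T=\langle M\rangle^{-1}$, each $N_n$ has the \emph{deterministic} angle-bracket $\langle N_n\rangle(t)=n^{-1}\langle M\rangle(T(nt))=t$; hence in the limit the Brownian motion $B$ carries only ``fresh'' Gaussian randomness and must be asymptotically uncorrelated with everything observed along the way. To make this precise I would work with the natural (completed, right-continuous) filtration $\mathcal{G}=(\mathcal{G}_t)_{t\ge 0}$ of the branching process $\mathbf{Z}$, so that $M$ and $W_t:=\e^{-t}Y_t$ are $\mathcal{G}$-martingales, the compensator $\langle M\rangle$ from \eqref{E:angleb} is $\mathcal{G}$-adapted and continuous, and each $T(u)$ is a $\mathcal{G}$-stopping time because $\{T(u)\le t\}=\{\langle M\rangle(t)\ge u\}\in\mathcal{G}_t$. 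For $n\in\NN$ I set $\mathcal{F}^n_t:=\mathcal{G}_{T(nt)}$; since $T$ is nondecreasing these form a nested array of filtrations, $\mathcal{F}^n_t\subseteq\mathcal{F}^{n+1}_t\subseteq\mathcal{G}_\infty$, and $N_n$ is an $\mathcal{F}^n$-square-integrable martingale.

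The first step is to locate the limit variable inside this array. By \eqref{E:KS}, $W_t\to\tau$ almost surely as $t\to\infty$, so $\tau$ is $\mathcal{G}_\infty$-measurable; and since $T$ is a bijection of $\RR_+$ with $T(nt)\to\infty$, the array $(\mathcal{F}^n_t)_{n,t}$ generates $\mathcal{G}_\infty$, so $\tau$ is measurable with respect to it.

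The main step is to invoke the martingale FCLT in its stable form. Its hypotheses were already verified in the proof of Proposition~\ref{P1}: $N_n$ is an $\mathcal{F}^n$-square-integrable martingale with $\langle N_n\rangle(t)=t$ deterministic and continuous, and the maximal jump of $N_n$ on $[0,t]$ tends to $0$ in $L^2(\PP)$ because $X$ is bounded. Because the limiting bracket is deterministic, the FCLT delivers not merely convergence in law but convergence \emph{stably} relative to $\mathcal{G}_\infty$ (see Section~VIII.3 and the attendant stable limit theorems in \cite{JS}): for every bounded $\mathcal{G}_\infty$-measurable $G$ and every bounded continuous functional $\Phi$ on the Skorokhod space, $\EE[\,G\,\Phi(N_n)\,]\to\EE[G]\,\EE[\Phi(B)]$, where $B$ is a standard Brownian motion. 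Specializing to $G=g(\tau)$ with $g$ bounded continuous gives $\EE[\,g(\tau)\,\Phi(N_n)\,]\to\EE[g(\tau)]\,\EE[\Phi(B)]$, which is precisely the asserted convergence of $(\tau,N_n)$ to $(\tau,B)$ with $B$ independent of $\tau$.

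I expect the delicate point to be the bookkeeping around the random, $n$-dependent time substitution $T(n\cdot)$: one must check that the $\mathcal{F}^n$ genuinely form an admissible nested array for the version of the stable FCLT being cited, and that $N_n$ is still a square-integrable $\mathcal{F}^n$-martingale after time-changing, which follows from optional stopping together with the moment bounds of Lemmas~\ref{L3} and~\ref{L-1}. If one prefers to avoid invoking stable convergence as a black box, the same result can be reached by hand: fix $s>0$, note that $\tau_n:=\e^{-T(ns)}Y_{T(ns)}$ is $\mathcal{F}^n_s$-measurable and converges to $\tau$ (almost surely and in $L^2$, by $L^2$-boundedness of the martingale $W$), apply the conditional form of the martingale FCLT to the increment processes $N_n(s+\cdot)-N_n(s)$ given $\mathcal{F}^n_s$ to obtain asymptotic independence of $\tau$ from those increments, and let $s\downarrow 0$ using that $\sup_{u\le s}|N_n(u)|$ is of order $\sqrt{s}$ uniformly in $n$ (Doob's inequality together with $\langle N_n\rangle(s)=s$); this amounts, however, to re-deriving the stable FCLT in the case at hand.
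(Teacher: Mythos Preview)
Your proposal is correct and follows a genuinely different route from the paper. The paper does not invoke stable convergence as a black box; instead it argues directly by approximation. It fixes $t>0$, observes that the martingale FCLT applied to the shifted processes $(N_n(s+t/n))_{s\geq 0}$ yields the desired joint convergence with any variable $A(t)$ that is measurable for $\sigma(M_{T(s)}:0\le s\le t)$, and then constructs such $A(t)$ with $A(t)\to\tau$ a.s.\ as $t\to\infty$. The construction is concrete: when $\PP(X=0)=0$, the jump count of $M_{T(\cdot)}$ on $[0,t]$ equals $Y_{T(t)}$, and the asymptotics of $T$ and $Y$ obtained from Corollary~\ref{C3} and \eqref{E:KS} give $Y_{T(t)}\sim((1-2p)t)^{1/(1-2p)}\tau^{-2p/(1-2p)}$, from which $\tau$ is recovered in the limit; a minor modification handles $\PP(X=0)>0$.

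Your approach, by contrast, recognises that the deterministic angle-bracket $\langle N_n\rangle(t)=t$ places one squarely in the mixing/stable regime of the martingale FCLT, so that joint convergence with any $\mathcal G_\infty$-measurable variable (hence with $\tau$) is automatic. This is cleaner and avoids the explicit reconstruction of $\tau$ from the path of $M_{T(\cdot)}$; it also makes transparent why the limiting Brownian motion must be independent of $\tau$. The paper's argument, on the other hand, is more self-contained: it relies only on the plain FCLT already used in Proposition~\ref{P1} and elementary asymptotics, at the price of the case distinction on $\PP(X=0)$. Your ``by hand'' alternative (approximating $\tau$ by $\e^{-T(ns)}Y_{T(ns)}$ at a fixed level $s$ and letting $s\downarrow 0$) is in the same spirit as the paper's approximation, though the paper freezes the sigma-algebra rather than the level and sends the approximation parameter to $\infty$ instead of $0$. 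The only point to watch in your main argument is the precise reference: the nested-filtration stable FCLT with deterministic limiting bracket is standard, but the exact statement you need is closer to the stable-convergence results in \cite{JS}~VIII.5c (or Hall--Heyde) than to VIII.3.
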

  
 \begin{proof} First fix $t>0$ and consider a random variable $A(t)$ which is measurable with respect to the sigma-algebra  $\sigma(M_{T(s)}: 0\leq s \leq t)$. 
 The statement with $A(t)$ replacing $\tau$ follows from the martingale FCTL applied to the sequence of processes $(N_n(s+t/n))_{s\geq 0}$, just  as in Proposition \ref{P1}. Corollary \ref{C2} can then be deduced,  provided that we can choose $A(t)$ such that 
 \begin{equation} \label{E:approx}
 \lim_{t\to \infty} A(t)=\tau\quad \text{a.s.}
 \end{equation}
Specifically, consider any continuous functional $F$ on the Skorohod space with $|F|\leq 1$, and any continuous and bounded function $f$ on $\RR$. 
Taking \eqref{E:approx} for granted, for any $\eta>0$ arbitrarily small, we can first choose $t>0$ sufficiently large so that 
$$\EE(|f(A(t))-f(\tau)|)\leq \eta,$$ 
and then $n(t)\in\NN$ such that
$$\left| \EE\left(f(A(t))F(N_n)\right) - \EE(f(A(t)))\EE(F(B)))\right| \leq \eta\qquad \text{for all } n\geq n(t).$$
We can conclude from the triangle inequality that
$$\left| \EE\left(f(\tau)F(N_n)\right) - \EE(f(\tau))\EE(F(B)))\right| \leq 3\eta\qquad \text{for all } n\geq n(t).$$

 We now have to construct variables $A(t)$ so that \eqref{E:approx} holds, and in this direction, we first assume that $\PP(X=0)=0$. All the steps of $\hat  S$ are non-zero a.s., and $Y_{T(t)}$ is then the total number of jumps of the process $M_{T(\cdot)}$ on the time interval $[0,t]$ (recall that this takes the initial jump at time $0$ into account), and hence  is measurable with respect to $\sigma(M_{T(s)}: 0\leq s \leq t)$. On the other hand, we deduce from Corollary \ref{C3} that
 \begin{equation} \label{E:AsT}
 T(t)= \frac{1}{1-2p}\ln\left(\frac{(1-2p)t}{\tau}\right) +o(1)\qquad \text{ as } t\to \infty,
 \end{equation}
 and then from \eqref{E:KS} that 
 \begin{equation} \label{E:AsY} 
 Y_{T(t)}\sim ((1-2p)t)^{1/(1-2p)} \tau^{-2p/(1-2p)}\qquad \text{ as } t\to \infty.
  \end{equation}
 The construction of the sought variables $A(t)$ is plain from \eqref{E:AsY}. 
 
 The case when $\PP(X=0)=a\in(0,1)$ only requires a minor modification. We can no longer identify $Y_{T(t)}$ with the total number of jumps of the process $M_{T(\cdot)}$ on the time interval $[0,t]$. Nonetheless the latter is now close to $(1-a)Y_{T(t)}$ for $t\gg 1$, and we can conclude just as above. 
 \end{proof} 
 
 We now have all the ingredients needed for the proof of the invariance principle.
 \begin{proof}[Proof of Theorem \ref{T2} when $X$ is bounded]
  
 Set $$s=s(t)=(\tau^{-2p}t)^{1/(1-2p)}\quad \text{and} \quad k=k(n)=((1-2p)n)^{1/(1-2p)},$$
 so \eqref{E:AsY}  and \eqref{E:AsT} yield respectively
 $$Y_{T(nt)} \sim ks \quad \text{and} \quad   \e^{-pT(nt)} \sim (ks/\tau)^{-p}.$$
 Since $ n = k^{1-2p}/(1-2p)$ and $t=\tau^{2p}s^{1-2p}$, we deduce from Corollary \ref{C2} and the very definition of the Skorohod topology involving changes of time (see, e.g. Section VI.1 in \cite{JS}) that as $k\to \infty$, the sequence of processes  $k^{-1/2}(\hat S(\lfloor ks\rfloor))_{s\geq 0}$ converges in distribution in the sense of Skorohod towards 
 $$\left( \frac{\tau^{-p}}{\sqrt {1-2p}} s^p B(\tau^{2p} s^{1-2p})\right)_{s\geq 0},$$
 where $B$ is a standard Brownian motion independent of $\tau$. By the scaling property of Brownian motion, the process displayed above has the same distribution as
 $$\left( \frac{s^p}{\sqrt {1-2p}} B(s^{1-2p})\right)_{s\geq 0},$$
 and we complete the proof with \eqref{E:Wrep2}. 
 \end{proof}
 
 \subsection{Reduction to the case when $X$ is bounded}
In this section, we only assume that $X\in L^2(\PP)$ with $\EE(X)=0$. We shall complete the proof of  Theorem \ref{T2} by a truncation argument, for which some notation is needed.

 For every  $b>0$, we set
$$X^{(b)}= \mathbf 1_{|X|\leq b}X - \EE(X \mathbf 1_{|X|\leq b}),$$ so $X^{(b)}$ is a centered and bounded variable; we write $\sigma^{(b)}$ for its standard deviation. 
Similarly, we set 
$$\hat X^{(b)}_n= \mathbf 1_{|\hat X_n|\leq b}\hat X_n - \EE(X \mathbf 1_{|X|\leq b})\quad \text{and} \quad \hat S^{(b)}(n)= \hat X^{(b)}_1+\cdots + \hat X^{(b)}_n.
$$
Clearly, $\hat S^{(b)}$ is a version of the step-reinforced random walk with typical step distributed as $X^{(b)}$. 
The latter being bounded, an application of Theorem \ref{T2} as proven in the preceding section shows that 
there is the convergence in distribution in the sense of Skorohod  
\begin{equation} \label{E:fin-3}n^{-1/2} \hat S^{(b)} (\lfloor \cdot n\rfloor)  \, \Longrightarrow \ \sigma^{(b)} \hat B(\cdot) \qquad \text{ as $n\to \infty$, }
\end{equation}
where $\hat B$ denotes a noise reinforced Brownian motion with reinforcement parameter $p$. 

Recall that the topology of  weak convergence on the set of probability measures on  the Skorokhod space of c\`adl\`ag paths is metrizable. Since plainly $\lim_{b\to\infty} \sigma^{(b)}=\sigma$, it follows readily from \eqref{E:fin-3} that we have also
\begin{equation} \label{E:fin-2}
n^{-1/2} \hat S^{(b(n))} (\lfloor \cdot n\rfloor)  \, \Longrightarrow \ \sigma \hat B(\cdot) \qquad \text{ as $n\to \infty$}
\end{equation}
for some sequence $(b(n))$ of positive real number that tends to $\infty$ slowly enough. 

Then consider
 $$\check S^{(b(n))}(n)=  \hat S(n)-\hat S^{(b(n))}(n),$$
 and observe that
 $$ \check S^{(b(n))}(n)= \check X^{(b(n))}_1+\cdots + \check X^{(b(n))}_n$$
 with
 $$\check X^{(b(n))}_n= 
 \mathbf 1_{|\hat X_n|> b(n)}\hat X_n - \EE(X \mathbf 1_{|X|> b(n)}).$$
In turn, $\check S^{(b(n))}$
 is also a step-reinforced random walk, now with typical step distributed as $X-X^{(b(n))}$.
The latter is a centered variable in $L^2(\PP)$, and if we write $\varsigma^{(b(n))}$ for its standard deviation, 
then clearly  $\lim_{n\to\infty} \varsigma^{(b(n))}=0$ since $b(n)$ tends to $\infty$. We deduce from Corollary \ref{C1} that for any $t>0$,
\begin{equation} \label{E:fin-1}\lim_{n\to \infty}n^{-1}\EE \left (\max_{k\leq nt}|\check S^{(b(n))}(k)|^2 \right) =0.
\end{equation}

We now see from \eqref{E:fin-1} and the Markov inequality that the  requirement  3.30 on page 316 in \cite{JS} holds for $Z^n_{\cdot}= n^{-1/2} \check S^{(b(n))}(\lfloor \cdot n\rfloor)$. 
This enables us to apply Lemma 3.31 there with $Y^n_{\cdot}= n^{-1/2} \hat S^{(b(n))} (\lfloor \cdot n\rfloor) $, and we conclude that 
$$n^{-1/2} \hat S (\lfloor \cdot n\rfloor) =  Y^n_{\cdot}+Z^n_{\cdot}  \, \Longrightarrow \ \sigma \hat B(\cdot) \qquad \text{ as $n\to \infty$.}$$
The proof of Theorem \ref{T2} is now complete. 

 \bibliography{NoiseR.bib}

\begin{thebibliography}{10}

\bibitem{Baur}
{\sc Baur, E.}
\newblock On a class of random walks with reinforced memory.
\newblock arXiv:1909.04633.

\bibitem{BaurBer}
{\sc Baur, E., and Bertoin, J.}
\newblock Elephant random walks and their connection to {P}\'olya-type urns.
\newblock {\em Phys. Rev. E 94\/} (Nov 2016), 052134.

\bibitem{Bercu}
{\sc Bercu, B.}
\newblock A martingale approach for the elephant random walk.
\newblock {\em J. Phys. A 51}, 1 (2018), 015201, 16.

\bibitem{Bercu2}
{\sc Bercu, B., and Laulin, L.}
\newblock On the multi-dimensional elephant random walk.
\newblock {\em Journal of Statistical Physics 175}, 6 (Jun 2019), 1146--1163.

\bibitem{NRLP}
{\sc Bertoin, J.}
\newblock Noise reinforcement for {L}\'evy processes.
\newblock arXiv:1810.08364, to appear in \textit{Ann. Inst. Henri Poincar\'e
  B}.

\bibitem{Buart}
{\sc Businger, S.}
\newblock The shark random swim ({L}\'{e}vy flight with memory).
\newblock {\em J. Stat. Phys. 172}, 3 (2018), 701--717.

\bibitem{ColGavSch1}
{\sc Coletti, C.~F., Gava, R., and Sch\"{u}tz, G.~M.}
\newblock Central limit theorem and related results for the elephant random
  walk.
\newblock {\em J. Math. Phys. 58}, 5 (2017), 053303, 8.

\bibitem{ColGavSch2}
{\sc Coletti, C.~F., Gava, R., and Sch\"{u}tz, G.~M.}
\newblock A strong invariance principle for the elephant random walk.
\newblock {\em J. Stat. Mech. Theory Exp.}, 12 (2017), 123207, 8.

\bibitem{Copapa}
{\sc Coletti, C.~F., and Papageorgiou, I.}
\newblock Asymptotic analysis of the elephant random walk.
\newblock arXiv:1910.03142.

\bibitem{G-NL}
{\sc Gonz\'{a}lez-Navarrete, M., and Lambert, R.}
\newblock Non-{M}arkovian random walks with memory lapses.
\newblock {\em J. Math. Phys. 59}, 11 (2018), 113301, 11.

\bibitem{Gouet}
{\sc Gouet, R.}
\newblock Martingale functional central limit theorems for a generalized
  {P}\'olya urn.
\newblock {\em The Annals of Probability 21}, 3 (1993), 1624--1639.

\bibitem{GutStadt}
{\sc Gut, A., and Stadtmueller, U.}
\newblock Elephant random walks with delays.
\newblock arXiv:1906.04930.

\bibitem{JS}
{\sc Jacod, J., and Shiryaev, A.~N.}
\newblock {\em Limit Theorems for Stochastic Processes}.
\newblock Grundlehren der mathematischen Wissenschaften. Springer Berlin
  Heidelberg, 2002.

\bibitem{Janson}
{\sc Janson, S.}
\newblock Functional limit theorems for multitype branching processes and
  generalized p\'olya urns.
\newblock {\em Stochastic Processes and their Applications 110}, 2 (2004), 177
  -- 245.

\bibitem{Kal}
{\sc Kallenberg, O.}
\newblock {\em Foundations of modern probability}, second~ed.
\newblock Probability and its Applications (New York). Springer-Verlag, New
  York, 2002.

\bibitem{KV}
{\sc Kious, D., and Sidoravicius, V.}
\newblock Phase transition for the once-reinforced random walk on
  $\mathbb{Z}^{d}$-like trees.
\newblock {\em Ann. Probab. 46}, 4 (2018), 2121--2133.

\bibitem{KuTa}
{\sc Kubota, N., and Takei, M.}
\newblock Gaussian fluctuation for superdiffusive elephant random walks.
\newblock {\em J. Stat. Phys. 177}, 6 (2019), 1157--1171.

\bibitem{Kur}
{\sc K\"{u}rsten, R.}
\newblock Random recursive trees and the elephant random walk.
\newblock {\em Phys. Rev. E 93}, 3 (2016), 032111, 11.

\bibitem{Pem}
{\sc Pemantle, R.}
\newblock A survey of random processes with reinforcement.
\newblock {\em Probab. Surveys 4\/} (2007), 1--79.

\bibitem{RY}
{\sc Revuz, D., and Yor, M.}
\newblock {\em Continuous martingales and {B}rownian motion}, third~ed.,
  vol.~293 of {\em Grundlehren der Mathematischen Wissenschaften [Fundamental
  Principles of Mathematical Sciences]}.
\newblock Springer-Verlag, Berlin, 1999.

\bibitem{SchTr}
{\sc Sch\"utz, G.~M., and Trimper, S.}
\newblock Elephants can always remember: Exact long-range memory effects in a
  non-markovian random walk.
\newblock {\em Phys. Rev. E 70\/} (Oct 2004), 045101.

\bibitem{Whitt}
{\sc Whitt, W.}
\newblock Proofs of the martingale {FCLT}.
\newblock {\em Probab. Surv. 4\/} (2007), 268--302.

\end{thebibliography}

\end{document}